\definecolor{gris}{gray}{0.7}
\definecolor{nar}{rgb}{.8,0,1}
\definecolor{otro}{rgb}{0,1,1}
\definecolor{gris}{gray}{0.7}
\newcommand{\D}{\displaystyle}
\newcommand{\R}{\mathbb{R}}
\newcommand{\angnabla}{\slashed{\nabla}}
\newtheorem{theorem}{Theorem}
\newtheorem{remark}{Remark}
\newtheorem{lemma}{Lemma}
\newtheorem{proposition}{Proposition}
\begin{document}

\title[Scattering for NLS outside a 2d star--shaped obstacle]
{Scattering for solutions of NLS in the exterior
of a 2d star--shaped obstacle}

  \author{Fabrice Planchon}
  \address{Laboratoire J. A. Dieudonn\'e, UMR 7351\\
    Universit\'e de Nice Sophia-Antipolis\\
    Parc Valrose\\
    06108 Nice Cedex 02\\
    France and Institut universitaire de France} \email{fabrice.planchon@unice.fr} \thanks{The first
    author was partially supported by A.N.R. grant SWAP}

\author{Luis Vega}
\address{Universidad del Pa\'is Vasco, Departamento deMatem\'aticas,\\
 Apartado 644, 48080, Bilbao, Spain}
\email{luis.vega@ehu.es}\thanks{The 
   second author was partially supported by UPV/EHU and by MEC grant MTM 2007--03029}

 \date{}

\begin{abstract}
   We prove that solutions to  non-linear Schr\"odinger equations in
   two dimensions and in the exterior of  a bounded and smooth star-shaped
   obstacle scatter in the energy space. The non-linear potential is
   defocusing and grows at least as the quintic power.\end{abstract}

\maketitle
  
   \section{Introduction and main results.}
  \label{S1}
  
  In this paper we are interested in the initial value problem
  \begin{equation}
\label{eq1}
\left\{\begin{array}{rcl}
i\partial_t u+\Delta u&=&\epsilon |u|^{p-1}u\qquad u\in\R^2\,\setminus\,\Sigma=\Omega\quad,\quad \epsilon\in\{0,1\},\quad p\geq5,\\
\left. u\right|_{\partial \Sigma}&=&0\\
u(x,0)&=&u_0(x)\in H_0^1(\Omega)
\end{array}\right.
\end{equation}
  and more particularly in the proof  
  of scattering properties of the solutions. With respect to $\Sigma$
  we will assume that it is a star--shaped domain contained in a
  compact set $K$, and that its boundary
  $\partial\Sigma=\partial\Omega$ is a smooth curve in $\R^2$.
  
  As it is well--known dimensions one and two are the most delicate
  ones regarding scattering questions even if no obstacle is
  considered and one works in the full space $\R^d$. The main
  obstruction comes from the sign of the bilaplacian term that appears
  in the classical Morawetz-Lin-Strauss inequality \cite{LS}.  This
  sign turns out to be the wrong one for $d=1, 2$. In dimensions three
  and higher and in $\R^d$ the question was settled in the energy
  space by J. Ginibre and G. Velo in \cite{GV1} .  The obstruction in
  the dimension was finally removed, still in full space, by
  K. Nakanishi, \cite{N}, but his technique does not seem directly applicable in the
  exterior of a 2D domain; that is our interest here. One should
  remark that the 1D case (with both Dirichlet or Neuman boundary
  conditions) follows directly from the $\R$ case, as symmetry with
  respect to $x=0$ is preserved by the nonlinear flow. Hence for
  exterior domains the 2D case may be seen as the most difficult one.

  Another fundamental contribution regarding global existence and
  scattering was the introduction by Colliander, Keel, Staffilani,
  Takaoka, and Tao in \cite{CKSTT1} and \cite{CKSTT2} of interactive
  Morawetz inequalities. However a similar obstruction in  low
  dimensions appears as well. Again the difficulty comes from the sign of
  the bilaplacian term that appears in the  use of the bilinear
  multiplier. It turns out that this obstruction can be easily 
  bypassed (see \cite{CGT} and \cite{PV}) and as a consequence the
  scattering of NLS equations for $L^2$ subcritical non--linearities
  is by now fairly well understood (see for example \cite {GV2} for a good
  survey, and also \cite{CHVZ} for earlier results in 1D using a
  quadrilinear interactive Morawetz inequality).
  
  In \cite{PV} the bilinear multiplier technique is also used in the
  setting of exterior domains with Dirichlet boundary conditions. In
  order to explain our results we have to introduce some notation.
  
  Let 
  $$n\ge1\quad,\quad p>1\quad,\quad \epsilon\in\{-1,0,1\}\quad,\quad
  \Omega_1,\Omega_2\subset \R^n$$
  with smooth boundaries $\partial \Omega_1,\partial\Omega_2$, and $u_1,u_2$ solutions of
   \begin{equation}
\label{eq2}
i\partial_t u_j+\Delta u_j=\epsilon \left|u_j\right|^{p-1}u_j\qquad\text{with }\, \left. u_j\right|_{\partial \Omega_j}=0\,\quad j=1,2.
\end{equation}

Denote for $j=1,2$
\begin{equation}
\label{eq3}
M(u_j)=\D\int_{\Omega_j}\left|u_j\right|^2dx\,;
\qquad
E(u_j)=\D\frac 12\int_{\Omega_j}\left|\nabla u\right|^2+\frac \epsilon{p+1}\int_{\Omega_j}\left|u_j\right|^{p+1}dx,
\end{equation}
the mass and energy that are conserved quantities. As in \cite{PV} we shall use the Radon transform
\begin{equation}
\label{eq3.1}
R(f)(s,\omega)=\D\int_{\{x\cdot \omega=s\}\cap\Omega}f d\mu_{s,\omega}
%no es si es \Omega o \Omega_j
\end{equation}
   
  with $\Omega$ either $\Omega_1$ or $\Omega_2$. In other words we extend $f$ as zero outside of $\Omega$ and compute its Radon transform.
  
  We set
  \begin{equation}
\label{eq3.2}
I_{\rho}(t)=I_{\rho}\big(u_1(t),u_2(t)\big)=\D\int_{\Omega_1\times\Omega_2}
\rho(x-y)\big|u_1(x)\big|^2\big|u_2(y)\big|^2 dx dy.
\end{equation}
  
  Then a simple modification of the proof of Theorem 2.5 in \cite{PV} gives us \footnote{There are some misprints in the second and fourth term of the RHS of (2.19) in \cite{PV}. A factor 2 is missing in both of them.}
  \begin{theorem}
  \label{Th1}
  Let $\omega\in\R^n$, $n>1$, with $|\omega|=1$ and $\rho_{\omega}(z)=|z\cdot \omega|$, $u_j$ solution of \eqref{eq2} with $j=1,2$. Then if $x=x^{\perp}+s\omega$ and $x^{\perp}\cdot\omega=0$ we have
  \begin{multline}
\label{eq4}
\frac{d^2}{dt^2}
I_{\rho_{\omega}}=\D\int_s\left|\partial_s\big(\mathcal R(u_1\overline
  u_2)\big)\right|^2ds\\
{}+
\epsilon\frac{p-1}{p+1}\left(\int_s\mathcal R\big(|u_1|^2\big)\mathcal R\big(|u_2|^{p+1}\big)ds+\D\int_s\mathcal R\big(|u_2|^2\big)\mathcal R\big(|u_1|^{p+1}\big)ds\right)\\
{}+\D\int_s\int_{x\cdot \omega=s}\int_{y\cdot\omega=s}
\left|u_1\left(x^{\perp}+s\omega\right)\partial_s
  u_2\left(y^{\perp}+s\omega\right)\right.\\
\left.{}- u_2\left(y^{\perp}+s\omega\right)\partial_s u_1\left(x^{\perp}+s\omega\right)\right|^2dx^{\perp}dy^{\perp}ds\\
{}-\D\int_{\partial\Omega_1\times\Omega_2}|u_2|^2(y)\partial_n\rho_{\omega}(x-y)\left|\partial_n
  u_1\right|^2(x)dS_xdy\\
{}-\D\int_{\Omega_1\times\partial\Omega_2}|u_1|^2(x)\partial_n\rho_{\omega}(x-y)\left|\partial_n u_2\right|^2(y)dxdS_y
\end{multline}
where $\partial_n$ is the outgoing normal vector field on $\partial\Omega$.
  \end{theorem}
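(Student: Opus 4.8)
The plan is to run the bilinear virial computation of \cite{PV} (whose Theorem~2.5 is exactly the case $\Omega_1=\Omega_2=\R^n$), keeping track of the surface terms the obstacle creates. Write $P_j:=|u_j|^2$ and $\vec m_j:=\operatorname{Im}(\bar u_j\nabla u_j)$. Taking imaginary parts of \eqref{eq2} against $\bar u_j$, and a standard computation for the momentum, give, in the interior of $\Omega_j$, the local conservation laws
\begin{gather*}
\partial_t P_j+2\operatorname{div}\vec m_j=0,\\
\partial_t m_j^k+\partial_\ell\big(2\operatorname{Re}(\partial_k\bar u_j\,\partial_\ell u_j)-\tfrac12\delta_{k\ell}\Delta P_j+\epsilon\tfrac{p-1}{p+1}\delta_{k\ell}|u_j|^{p+1}\big)=0 .
\end{gather*}
The one structural input from the boundary is the Dirichlet condition: since $u_j=0$ on $\partial\Omega_j$, also $P_j=0$, $\nabla P_j=2\operatorname{Re}(\bar u_j\nabla u_j)=0$ and $\vec m_j=0$ there, while the tangential derivative of $u_j$ along $\partial\Omega_j$ vanishes, so that $\nabla u_j=(\partial_n u_j)\,n$ on $\partial\Omega_j$.

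Differentiate $I_\rho$ in $t$. Inserting the continuity equation and integrating by parts in $x$ and $y$, the boundary contributions carry the factor $\vec m_j|_{\partial\Omega_j}=0$ and disappear, so $\tfrac{d}{dt}I_\rho=2\int_{\Omega_1\times\Omega_2}\nabla\rho(x-y)\cdot\big(\vec m_1(x)P_2(y)-P_1(x)\vec m_2(y)\big)\,dx\,dy$. Differentiating once more: the terms in which $\partial_t$ hits a mass density are handled again by the continuity equation and produce, with no boundary term, a ``momentum bracket'' built from the Hessian of $\rho$ contracted against $\vec m_1(x)\otimes\vec m_2(y)$; the terms in which $\partial_t$ hits a momentum density bring in the divergence of the stress tensor, and here integration by parts against $\rho$ does leave a surface integral on $\partial\Omega_j$. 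Using $\nabla u_j=(\partial_n u_j)n$, the $\operatorname{Re}(\partial_k\bar u_j\partial_\ell u_j)$ part of the stress tensor gives on $\partial\Omega_j$ a term proportional to $\partial_n\rho(x-y)\,|\partial_n u_j|^2$; the $\Delta P_j$ part, after the integration by parts, gives (using $\Delta P_j=2|\partial_n u_j|^2$ on $\partial\Omega_j$, valid because $u_j=0$ and $\nabla u_j\parallel n$ there) another multiple of the same quantity; and the nonlinear part gives nothing, since $|u_j|^{p+1}=0$ on $\partial\Omega_j$. Adding these up reproduces exactly the last two integrals of \eqref{eq4}.

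It remains to handle the interior terms and to specialize $\rho=\rho_\omega$, $\rho_\omega(z)=|z\cdot\omega|$. Then $\partial^2_{k\ell}\rho_\omega(z)=2\,\omega_k\omega_\ell\,\delta(z\cdot\omega)$, so, writing $x=x^{\perp}+s\omega$ and $y=y^{\perp}+s\omega$, every interior term collapses onto the parallel hyperplanes $\{x\cdot\omega=s\}$, $\{y\cdot\omega=s\}$ and becomes one--dimensional in $s$; in particular $\Delta\rho_\omega(z)=2\delta(z\cdot\omega)$ and the wrong--sign bilaplacian contribution turns into $\int_s\partial_s^2\mathcal R(P_1)\,\mathcal R(P_2)\,ds$ by the Radon identity $\mathcal R(\Delta f)=\partial_s^2\mathcal R(f)$ (legitimate here because the zero--extension of $P_j$ is $C^1$ across $\partial\Omega_j$, so its distributional Laplacian is the extension of $\Delta P_j$, using the vanishing of $P_j$ and $\nabla P_j$ on $\partial\Omega_j$). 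Collecting the surviving interior terms — the momentum bracket $\int_s\omega\!\cdot\!\mathcal R(\vec m_1)\,\omega\!\cdot\!\mathcal R(\vec m_2)$, the kinetic terms $\int_s\mathcal R(|\partial_\omega u_j|^2)\mathcal R(P_k)$, and the transformed bilaplacian — one checks, by exactly the elementary rearrangement used in the one--dimensional case of \cite{PV}, that they assemble into the first RHS term $\int_s|\partial_s\mathcal R(u_1\bar u_2)|^2\,ds$ together with the manifestly nonnegative cross term $\int_s\int_{x\cdot\omega=s}\int_{y\cdot\omega=s}|u_1\partial_s u_2-u_2\partial_s u_1|^2$, while the nonlinear pieces assemble into $\epsilon\frac{p-1}{p+1}\big(\int_s\mathcal R(P_1)\mathcal R(|u_2|^{p+1})+\int_s\mathcal R(P_2)\mathcal R(|u_1|^{p+1})\big)$.

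All of this (distributional identities for $\rho_\omega$, repeated integrations by parts in $x,y$ and in $s$, use of the equation) is first carried out for smooth, spatially well--localized solutions, where every quantity decays and every step is legitimate; the case $u_0\in H^1_0(\Omega)$ then follows by approximation, using the well--posedness theory together with continuity of both sides of \eqref{eq4}. The only genuine work, beyond this otherwise routine bookkeeping, is (i) verifying that the two boundary pieces produced by the stress tensor add up, with the correct numerical coefficient, to the clean surface integrals in \eqref{eq4}, and (ii) the one--dimensional algebraic rearrangement that hides the wrong--sign bilaplacian term inside $\int_s|\partial_s\mathcal R(u_1\bar u_2)|^2$; as the footnote to the statement suggests, it is precisely in these two steps that stray factors of $2$ tend to creep in.
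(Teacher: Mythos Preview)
Your proposal is correct and matches the paper's approach: the paper does not actually prove Theorem~\ref{Th1} but simply states that it follows from ``a simple modification of the proof of Theorem~2.5 in \cite{PV}'', which is precisely the bilinear virial computation you outline --- local mass/momentum conservation laws, integration by parts with the Dirichlet boundary data reducing all surface contributions to $\partial_n\rho_\omega\,|\partial_n u_j|^2$, specialization $\rho=\rho_\omega$ so the Hessian is a delta on hyperplanes, and the one-dimensional rearrangement hiding the bilaplacian inside $\int_s|\partial_s\mathcal R(u_1\bar u_2)|^2$. Your remark about stray factors of $2$ is exactly the content of the paper's footnote correcting (2.19) of \cite{PV}.
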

  
  A simple look at the statement of the above theorem tells us that it
  is useless unless the boundary terms appearing in \eqref{eq4} are
  under control. In \cite{PV} this is done using the so--called local
  smoothing property that follows from a variation of Morawetz's
  multiplier. As a consequence a sufficient geometric condition on the boundary
  naturally appears: if $u$ is a solution to \eqref{eq2}, and $n(x)$
  denotes the outgoing normal vector to $\partial\Omega$ at $x$, local
  smoothing for the nonlinear solution $u$ is obtained provided
$$
\int_0^T\int_{\partial \Omega} (x\cdot n(x))|\partial_n u|^2 \,dS_x dt
\geq 0\,.
$$
In particular, it is sufficient to assume that $\Omega$ is the
  complement of a star--shaped domain and this is what we are going to
  assume in this paper. Unfortunately the use of Morawetz's multiplier
  introduces again the restriction on the dimension and therefore the
  result in \cite{PV} (Proposition 2.7) is given in dimension three
  and higher. In this paper we remove such a restriction.
  
  The main new idea is to use Theorem \ref{Th1} with
  $\Omega=\Omega_1=-\Omega_2$, $u_1=u$ and $u_2(x)=u(-x)$ and $u$ a
  solution of \eqref{eq1}. Equivalently we may consider
  $\widetilde{\rho}=\rho(x+y)$ instead of $\rho(x-y)$ in the
  definition of $I_{\rho}$ in \eqref{eq3.2}. We then sum up the new (
  with $\widetilde\rho_\omega$)
  and old (with $\rho_\omega$) estimates: it follows that some cancellation
  occurs between boundary terms of
  \eqref{eq4}, and integrating over $\omega\in\mathbb{S}^{n-1}$ one obtains that
  the boundary term is bounded above by
  $$
\D\int_{\partial\Omega\times\Omega}\big|\partial_n
  u(x)\big|^2\big|u(y)\big|^2{\langle y\rangle^{-1}}dS_x\,dy\,,$$
 where  the gain in the above inequality comes  from the weight
 $\langle y\rangle=\big(1+|y|^2\big)^{1/2}$.
  
  It turns out this gain is sufficient. The reason is that we can use
  the tensor product technique developed in  \cite{CGT} and
  \cite{CHVZ}. In order to do that  we construct $v(x,y)=u(x)u(y)$,
  solution of NLS in $\Omega\times\Omega$, and use the local smoothing
  inequality obtained from Morawetz's multipliers in dimension $n=4$, see Proposition \ref{L2} below. That fits perfectly well with our
  purposes.
\begin{remark}
  In the $\R^n$ case, all monotonicity formulae stemming from virial
  identities are related to the conservation of the momentum, a
  key property which is lost on domains. The boundary terms then appear as a direct consequence. The star-shaped case
  provides a class of obstacles for which this loss is controlable by
  an integration by parts argument (the usual virial in dimension
  $n\geq 3$ or the  tensorialized one in the present paper for
  $n=2$). For the linear equation, microlocal techniques allow to
  control the momentum through local smoothing estimates - as they
  control boundary terms, see \cite{PV} -, when the exterior domain is
  non trapping (light rays escape to spatial infinity). But
  extending such a control to the nonlinear setting appears a
  challenging task.
\end{remark}
 We denote by $H^1_0=H^1_0(\Omega )$ the energy space, which is also the domain of the square root of $-\Delta _\Omega $, where
$\Delta _\Omega $ is the Dirichlet Laplacian on $\Omega $. We are now able to state our main theorem.
    %Theorem 2
   \begin{theorem}
  \label{Th2}
  Let $\Omega$ be $\R^2\setminus \Sigma$, where $\Sigma$ is a star--shaped and bounded domain, and $u_0\in H_0^1(\Omega)$. Then, there exits a unique solution of
  $$\left\{\begin{array}{rcl}
  i\partial_t u+\Delta u&=&|u|^{p-1}u\qquad x\in\Omega\quad,\quad t\in\R\quad p\geq5,\\
  \left.u\right|_{\partial\Omega}&=&0,\\
  u(x,0)&=&u_0(x),
  \end{array}\right.$$
  such that
  $$u\in\mathcal C\big(\R:H_0^1(\Omega)\big)\cap L_t^{p-1}L_{x}^{\infty}.$$
  
  Moreover, there exist unique $u_+,u_-\in H_0^1(\Omega)$ such that
  \begin{equation}
\label{eq4'}
\left\|u(\cdot,t)-e^{it\Delta_{\Omega}}u_\pm\right\|=o(1)\quad t\to\pm\infty.
\end{equation}
Here $e^{it\Delta_{\Omega}}$ is the solution of the linear problem (i.e. \eqref{eq1} with $\epsilon=0$).

   \end{theorem}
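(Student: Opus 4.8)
The plan is to follow the now-standard interactive Morawetz scheme, but with the tensorialized substitute for the momentum that is made available by Theorem \ref{Th1} applied to $u_1=u$, $u_2(x)=u(-x)$. First I would establish local-in-time well-posedness and the blow-up alternative for \eqref{eq1} in $H_0^1(\Omega)$: this is classical, using the Strichartz estimates for the Dirichlet Laplacian on an exterior star-shaped (hence nontrapping) domain together with conservation of mass and energy; the defocusing sign of the nonlinearity gives the a priori bound on $\|u(t)\|_{H_0^1}$, so the only missing ingredient for global existence and for scattering is a global bound on a suitable Strichartz-type norm, for which the natural quantity in $2d$ is $\|u\|_{L_t^{p-1}L_x^\infty}$ (this is the norm appearing in the statement, and it is subcritical-friendly because $p\ge 5$).

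The core of the argument is an a priori spacetime bound. I would apply Theorem \ref{Th1} both with $\rho_\omega(z)=|z\cdot\omega|$ and with the reflected weight $\widetilde\rho_\omega(z)=|(x+y)\cdot\omega|$ (equivalently $u_2(\cdot)=u(-\cdot)$), sum the two identities, and integrate over $\omega\in\mathbb S^1$. As explained in the discussion preceding the theorem, the two "bad" boundary terms partially cancel, and what survives is controlled by
$$
\int_0^T\int_{\partial\Omega\times\Omega}|\partial_n u(x)|^2\,|u(y)|^2\,\langle y\rangle^{-1}\,dS_x\,dy\,dt\,,
$$
the extra decaying weight $\langle y\rangle^{-1}$ being the whole point. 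On the left one is left with the positive bilinear terms: the diagonalized gradient term and the $\epsilon\frac{p-1}{p+1}$ potential term, both of which, after the $\omega$-integration, reproduce (up to the usual Radon-transform normalization) a quantity comparable to $\iint |u(x)|^2|u(y)|^{p+1}|x-y|^{-1}\,dx\,dy$ plus the symmetric term; this is the standard interactive-Morawetz output that in $\R^2$ yields, via Sobolev and the conservation laws, a bound on $\|u\|_{L_t^{p-1}L_x^\infty}$ (or an equivalent Lebesgue norm) over $[0,T]$. To close, I would dominate the surviving boundary term: form $v(x,y)=u(x)u(y)$, which solves an NLS on $\Omega\times\Omega\subset\R^4$, and invoke the $n=4$ local-smoothing estimate of Proposition \ref{L2}, whose right-hand side involves exactly $\|v(0)\|_{H^1}^2=\|u_0\|_{H^1}^4$ and an $L^\infty$-type norm that is itself controlled by the energy; the weight $\langle y\rangle^{-1}$ is what makes the product $u(x)u(y)$ integrable against the boundary measure on $\partial\Omega\times\Omega$ in the four-dimensional local smoothing. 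A bootstrap/continuity argument in $T$ then upgrades these to global-in-time bounds.

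Once the global Strichartz-type bound $\|u\|_{L_t^{p-1}L_x^\infty(\R\times\Omega)}<\infty$ is in hand, global existence follows from the blow-up alternative, and scattering \eqref{eq4'} is routine: writing $u(t)=e^{it\Delta_\Omega}u_0 - i\int_0^t e^{i(t-s)\Delta_\Omega}(|u|^{p-1}u)(s)\,ds$, the Duhamel term is Cauchy in $H_0^1$ as $t\to\pm\infty$ by Strichartz estimates (on the exterior nontrapping domain) and the finiteness of the global norm, which defines $u_\pm$ and gives the decay to the free evolution; uniqueness of $u_\pm$ is immediate from the unitarity of $e^{it\Delta_\Omega}$ on $H_0^1$.

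I expect the genuine obstacle to be the second step — making the cancellation of boundary terms precise after the double application of Theorem \ref{Th1} and the $\omega$-average, and then checking that the residual boundary term with the $\langle y\rangle^{-1}$ weight is exactly of the shape absorbed by the four-dimensional local-smoothing estimate of Proposition \ref{L2}. The interior (positive) terms are handled as in the earlier literature \cite{CGT,CHVZ,PV}; it is the interplay between the star-shapedness (sign of $x\cdot n(x)$), the reflection trick $y\mapsto -y$, and the dimensional lift to $\R^4$ that requires care, together with the standard but slightly delicate justification of the virial computations for merely $H_0^1$ solutions (done by regularization and a limiting argument).
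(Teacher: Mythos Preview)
Your outline correctly identifies the genuinely new ingredients: the reflection trick $u_2(x)=u(-x)$, the cancellation of boundary terms after summing the two applications of Theorem~\ref{Th1} and integrating over $\omega\in\mathbb S^1$ (this is Lemma~\ref{L1}), and the control of the residual $\langle y\rangle^{-1}$--weighted boundary term by the 4D local smoothing of Proposition~\ref{L2}. On that part you are essentially aligned with the paper (modulo the harmless inaccuracy that the right-hand side in Proposition~\ref{L2} is $M^{3/2}E^{1/2}$, not $\|u_0\|_{H^1}^4$, and involves no $L^\infty$-type norm).

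There is, however, a real gap in the step you label ``standard''. The interior output of the bilinear Morawetz in $2d$ is the gradient term $\|D^{1/2}(|u|^2)\|_{L^2_{t,x}}$ (this is Proposition~\ref{P1}; the potential term $\iint|u|^2|u|^{p+1}|x-y|^{-1}$ is discarded, not used). By Sobolev this controls $\|u\|_{L^4_tL^8_x}$, and that is all. It does \emph{not} give $\|u\|_{L^{p-1}_tL^\infty_x}$ ``via Sobolev and the conservation laws'': in two dimensions $H^1$ fails to embed in $L^\infty$, so no interpolation between $L^4_tL^8_x$ and $L^\infty_tH^1_0$ reaches $L^\infty_x$. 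The paper closes this gap in a specific way that you have not described: it first derives, from the bilinear Morawetz applied to frequency-localized \emph{linear} solutions plus square-function/Riesz-transform bounds on $\Omega$, the Strichartz-type estimate $\|D^s e^{it\Delta_\Omega}u_0\|_{L^4_tL^8_x}\lesssim\|u_0\|_{H_0^{s+1/4}}$ (Proposition~\ref{P2}); it then uses Gagliardo--Nirenberg $\|f\|_{L^\infty}\lesssim\|f\|_{L^8}^{2/3}\|D^{3/4}f\|_{L^8}^{1/3}$ to obtain $\|e^{it\Delta_\Omega}u_0\|_{L^4_tL^\infty_x}\lesssim\|u_0\|_{H_0^s}$, $s>1/2$. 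For the nonlinear solution one \emph{cannot} apply Proposition~\ref{P2} directly; instead the paper partitions time into finitely many intervals $I_j$ on which $\|u\|_{L^4_{I_j}L^8_x}$ is small (possible precisely because of the global $L^4_tL^8_x$ a priori bound), runs the Duhamel/Strichartz iteration on each $I_j$ to control $\|D^{3/4}u\|_{L^4_{I_j}L^8_x}$ in terms of $E^{1/2}(1+\|u\|_{L^4_{I_j}L^\infty_x}^4)$, and closes a continuity argument in $\sigma(t)=\int_{t_j}^t\|u\|_{L^\infty_x}^4$. Your bootstrap remark is in the right spirit but misplaced: no continuity is needed to get the Morawetz a priori bound (that is a direct inequality once the boundary is controlled); the continuity argument lives entirely in the passage $L^4_tL^8_x\Rightarrow L^4_tL^\infty_x$.
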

 % \begin{remark}  \label{R4}
  %The proof of the above result easily extends to  nonlinearities $p>5$. We include some comments about the necessary modifications at the end of section 2.
  %\end{remark}
  
   The reader can guess that the key property of the solution obtained
   in the above theorem is that it satisfies the global in time
   Strichartz estimate $L_t^{p-1} L_{x}^{\infty}$. The key step in
   that direction stems from Theorem \ref{Th1}
   and the local smoothing estimate that we mentioned before: one gets
   that $D^{1/2}(|u|^2)$ is  in $L^2$ of both variables space and
   time. This is also true for linear solutions. This turns out to be
   the last key ingredient for the proof of Theorem \ref{Th2}. 
   We have the following result.
   
    %Theorem 3
   \begin{theorem}
  \label{Th3}
  Let $\Omega=\R^2\setminus \Sigma$, where $\Sigma$ is star--shaped and bounded. Define $e^{it\Delta_\Omega} u_0$ as the solution of
  $$\left\{\begin{array}{rcl}
  i\partial_t u+\Delta u&=&0\qquad x\in\Omega\quad,\quad t\in\R\\
  \left.u\right|_{\partial\Omega}&=&0\\
  u(x,0)&=&u_0(x),
  \end{array}\right.$$
  
  Then
$$
    \left\|D^{1/2}(|u|^2)\right\|_{L_t^2L_{x}^2} \le
    C\left\|u_0\right\|_{L^2}^{3/2}\left\|u_0\right\|_{H_0^1}^{1/2}\,,
$$
and, for $0\leq s\leq3/4$,
$$
\left\|D^s u\right\|_{L_t^4L_x^8}  \le C\left\|u_0\right\|_{H_0^{s+1/4}}\,.
$$
As a consequence
 \begin{equation}
   \label{eq:pinfty}
\left\|u\right\|_{L_t^4L_x^{\infty}}\le C\left\|u_0\right\|_{H_0^1}\,.
 \end{equation} 
 Here $D^s=(-\Delta)^{s/2}$ and ${H}_0^s(\Omega)={H}_0^s$ 
 denotes the domain of the operator $(-\Delta _\Omega
)^{s/2}$ given
by the spectral theorem applied to the Dirichlet Laplacian on $\Omega$.
\end{theorem}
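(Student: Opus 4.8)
The assertion bundles three inequalities; the first carries the new content, the other two are standard consequences.

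\emph{The estimate on $\|D^{1/2}(|u|^2)\|_{L^2_tL^2_x}$.} Here Theorem \ref{Th1} is applied with $\epsilon=0$, so the second line of \eqref{eq4} disappears, in two configurations: the \emph{plain} one $\Omega_1=\Omega_2=\Omega$, $u_1=u_2=u=e^{it\Delta_\Omega}u_0$ with weight $\rho_\omega(x-y)$, and the \emph{reflected} one $\Omega_1=\Omega$, $\Omega_2=-\Omega$, $u_1=u$, $u_2(x)=u(-x)$ (equivalently $u_1=u_2=u$ on $\Omega\times\Omega$ with $\rho_\omega(x+y)$; note $u(-x,t)$ solves the same equation on $-\Omega$). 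Integrate both identities over $t\in[0,T]$ and over $\omega\in\mathbb S^1$ and add them. The left-hand sides contribute only the values of $\frac{d}{dt}I$ at $t=0$ and $t=T$; the third term of \eqref{eq4} is nonnegative in each and is discarded; the first term of \eqref{eq4} in the plain configuration is $\int_s|\partial_s(\mathcal R(|u|^2))|^2\,ds$, which after integration in $\omega$ equals, by the Radon--Plancherel identity in $\R^2$, a fixed positive multiple of $\|D^{1/2}(|u|^2)\|^2_{L^2([0,T]\times\R^2)}$, while the first term in the reflected configuration is nonnegative and is dropped. Finally, summing the boundary terms of \eqref{eq4} from the two configurations and integrating over $\omega$ produces exactly the cancellation explained after \eqref{eq4}: the leading parts $\pm y/|y|$ of $\partial_{n,x}|x\mp y|$ cancel and, since $|x|$ is bounded on $\partial\Omega$, what remains is $O(\langle y\rangle^{-1})$. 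Hence
$$
\big\|D^{1/2}(|u|^2)\big\|^2_{L^2([0,T]\times\R^2)}\ \lesssim\ \sup_{t\in\{0,T\}}\Big|\tfrac{d}{dt}I(t)\Big|\ +\ \int_0^T\!\!\int_{\partial\Omega\times\Omega}\big|\partial_n u(x)\big|^2\big|u(y)\big|^2\langle y\rangle^{-1}\,dS_x\,dy\,dt .
$$

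\emph{Bounding the right-hand side.} Since $\epsilon=0$, $\frac{d}{dt}I$ is $\iint\nabla\rho_\omega(x\mp y)$ paired with momentum densities of $u$, and $|\nabla\rho_\omega|\equiv1$; as $\|u(t)\|_{L^2}$ and $\|\nabla u(t)\|_{L^2}$ are conserved, $|\frac{d}{dt}I(t)|\lesssim\|u_0\|_{L^2}^3\|\nabla u_0\|_{L^2}$. For the boundary integral set $v(x,y,t)=u(x,t)u(y,t)$, a solution of the linear Dirichlet Schr\"odinger equation on $\Omega\times\Omega\subset\R^4$; since $\Sigma$ is star-shaped (w.l.o.g.\ about the origin), so is $\R^4\setminus(\Omega\times\Omega)$, hence $X\cdot N(X)\ge c>0$ on $\partial(\Omega\times\Omega)$. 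Apply Proposition \ref{L2} --- Morawetz's multiplier $|X|$ in dimension $4\ge3$, where the bilaplacian term has the right sign, so all interior terms may be discarded: its boundary contribution over $\partial\Omega\times\Omega$ is $\frac{X\cdot N}{|X|}|\partial_n u(x)|^2|u(y)|^2\gtrsim\langle y\rangle^{-1}|\partial_n u(x)|^2|u(y)|^2$ (because $|x|$ bounded on $\partial\Omega$ forces $|X|\sim\langle y\rangle$), while its right-hand side is $\lesssim\|v(0)\|_{L^2(\R^4)}\|\nabla v(0)\|_{L^2(\R^4)}=\|u_0\|_{L^2}^3\|\nabla u_0\|_{L^2}$. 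Thus the boundary integral is $\lesssim\|u_0\|_{L^2}^3\|\nabla u_0\|_{L^2}$, uniformly in $T$; letting $T\to\infty$ and taking square roots gives the first inequality, with $\|\nabla u_0\|_{L^2}\le\|u_0\|_{H^1_0}$. This step is the crux: in dimension $2$ the boundary terms of \eqref{eq4} are \emph{not} controlled by the two-dimensional Lin--Strauss/Morawetz multiplier (wrong sign of the bilaplacian), but the reflection trick furnishes the decay $\langle y\rangle^{-1}$ and the tensorialization $v=u\otimes u$ moves the problem to $\R^4$, where the multiplier $|X|$ is harmless.

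\emph{The Strichartz estimate.} Since $D^s=(-\Delta_\Omega)^{s/2}$ commutes with $e^{it\Delta_\Omega}$, $D^su=e^{it\Delta_\Omega}(D^su_0)$, so it suffices to prove $\|e^{it\Delta_\Omega}u_0\|_{L^4_tL^8_x}\lesssim\|u_0\|_{H^{1/4}_0}$ and then put $u_0\mapsto D^su_0$, using $\|D^su_0\|_{H^{1/4}_0}\lesssim\|u_0\|_{H^{s+1/4}_0}$ for $0\le s\le3/4$. The pair $(4,8)$ is not admissible in dimension $2$, but $\|e^{it\Delta_{\R^2}}f\|_{L^4_tL^8_x}\lesssim\|f\|_{\dot H^{1/4}(\R^2)}$ --- the usual $1/4$-derivative dispersive loss --- e.g.\ by interpolating the admissible bound $\|e^{it\Delta}f\|_{L^4_tL^4_x}\lesssim\|f\|_{L^2}$ with the Sobolev embedding $\dot W^{1/4,4}(\R^2)\hookrightarrow L^8(\R^2)$. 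The same estimate for $e^{it\Delta_\Omega}$ follows by the standard transference to non-trapping exterior domains: cut off near $\Sigma$, represent the outer part through Duhamel against the free $\R^2$-flow, control the commutator source in $L^2_tL^2_x$ by the interior local smoothing estimate for $\Omega$ (itself a by-product of the multiplier computations above), and apply Christ--Kiselev ($4>2$). The Sobolev mapping properties used here hold for $(-\Delta_\Omega)$ because the Dirichlet heat kernel on $\Omega$ is dominated by the Gaussian. Finally, for the last inequality take $s=3/4$: $\|D^{3/4}u\|_{L^4_tL^8_x}\lesssim\|u_0\|_{H^1_0}$, and since $3/4>2/8$ the embedding $\dot W^{3/4,8}(\R^2)\hookrightarrow L^\infty$ (again valid for $(-\Delta_\Omega)$) gives $\|u(t)\|_{L^\infty_x}\lesssim\|D^{3/4}u(t)\|_{L^8_x}$; taking the $L^4_t$-norm completes the proof.
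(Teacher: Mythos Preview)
Your treatment of the bilinear estimate $\|D^{1/2}(|u|^2)\|_{L^2_{t,x}}$ follows the paper's argument closely (Proposition~\ref{P1} combined with Lemma~\ref{L1} and Proposition~\ref{L2}). One caveat: star-shapedness only gives $x\cdot n_x\geq 0$, not your claimed strict lower bound $X\cdot N(X)\geq c>0$ on $\partial(\Omega\times\Omega)$. The paper notes that if the kernel of the obstacle contains a disk one may average over base points to avoid zeros, but when the kernel degenerates to a point or segment it devotes a substantial separate argument (Lemma~\ref{Ls4d} and the computation with the angular vector fields $Y_k$) to controlling the boundary term. Your ``w.l.o.g.'' hides this.

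For the Strichartz estimate $\|D^s u\|_{L^4_tL^8_x}\lesssim\|u_0\|_{H^{s+1/4}_0}$ you take a genuinely different route. The paper (Proposition~\ref{P2}) derives it \emph{directly from the bilinear bound just proved}: for a datum spectrally localized at Dirichlet frequency $N$, the first inequality gives $\|D^{1/2}(|u|^2)\|_{L^2_{t,x}}\lesssim N^{1/2}\|u_0\|_{L^2}^2$, and Sobolev $\dot H^{1/2}(\R^2)\hookrightarrow L^4$ yields $\|u\|^2_{L^4_tL^8_x}=\||u|^2\|_{L^2_tL^4_x}\lesssim N^{1/2}\|u_0\|_{L^2}^2$, hence $\|u\|_{L^4_tL^8_x}\lesssim N^{1/4}\|u_0\|_{L^2}$; one then sums via square-function estimates and Riesz-transform bounds for $-\Delta_\Omega$. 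This is entirely self-contained --- the Strichartz estimate is a corollary of the Morawetz machinery, with no appeal to the free $\R^2$ flow or to external results on domains (cf.\ Remark~\ref{R44}). Your transference argument is a legitimate alternative and such estimates are indeed known for non-trapping exterior domains, but the sketch glosses over genuine $2$D issues: the commutator source $[\Delta,\chi]u$ carries $\nabla u$, and local smoothing from $H^{1/4}$ data only places $u$ in $L^2_tH^{3/4}_{\mathrm{loc}}$, so your claim that the source lies in $L^2_tL^2_x$ is not immediate; nor do you say how the part of $u$ supported near $\partial\Omega$ lands in $L^4_tL^8_x$. The paper's approach sidesteps all of this and has the conceptual advantage that the linear Strichartz estimate is \emph{produced} by the interaction-Morawetz computation rather than imported.
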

 \begin{remark}
  \label{R4'}Notice that under the smoothness conditions on $\Omega$, $D^s\left(-\Delta_{\Omega}\right)^{-s/2}$ is bounded in $L^p\quad 1<p<\infty$. See for example \cite{IP} and references there in for the proof of this fact.
   \end{remark}
 \begin{remark}
  \label{R44} One may extend Theorem \ref{Th2} to nonlinearities with
  $p<5$, by using the known range of Strichartz estimates outside non
  trapping obstacles from \cite{BSS}: for $\varepsilon>0$,
$$
\| u\|_{L^{\frac 3 {1-\varepsilon}}_t L^\frac 2 \varepsilon_x} \leq
C(\varepsilon) \|u_0\|_{H^{\frac 1 3(1-\varepsilon)}_0}\,.
$$
This allows to replace $p=4$ in Theorem \ref{Th3} by $p=3/(1-\varepsilon)$,
which will however not fill the
  expected range $3<p<5$ in Theorem \ref{Th2}, by analogy with the $\R^2$ case, but rather
  provide $p>4$. We elected to keep the argument
  mostly self-contained, as the linear estimates from Theorem
  \ref{Th3} are enough for our purposes in the range $p\geq 5$.
   \end{remark}
   In the next section  we prove Theorems \ref{Th2} and \ref{Th3} and give the necessary propositions to be able to use Theorem \ref{Th1}. This one is proved as Theorem 2.5 in \cite{PV} and therefore we omit the details.   %%In Section 3 we state the analogous results for non--linearity $|u|^{p-1}u$ with $p>5$ and give some hints of the necessary modifications for the proofs.
  
  \section{Proofs.}
  \label{S2}
  We will first prove Theorem \ref{Th3}. We need some lemmas.
  
   %Lemma 1
   \begin{lemma}
  \label{L1}
  For $\omega\in\mathcal S^1$ define $\rho_{\omega}(x)=|x\cdot\omega|$ and 
  $$A_{\omega}=\left\{(x,y)\in\partial\Omega\times\Omega\,:\,|y\cdot\omega|\le|x\cdot\omega|\right\}.$$
  Then if $y\in\Omega$
    \begin{equation}
\label{eq6}
\renewcommand{\arraystretch}{1.5}
\begin{array}{l}
\left\{(x,y)\in\partial\Omega\times\Omega\,:\,\partial_n\rho_{\omega}(x-y)+\partial_n\rho_{\omega}(x+y)\neq0\right\}\\
\subset\left\{\text{sig}\,(x-y)\cdot\omega+\text{sig}\,(x+y)\cdot\omega\neq 0\right\}=A_{\omega},
\end{array}
\renewcommand{\arraystretch}{1}
\end{equation}
and
  \begin{equation}
\label{eq5}
\sup_{x\in\partial\Omega}\left|\D\int_{\mathcal S^1}\chi_{A_{\omega}}d\sigma(\omega)\right|
\le\frac{C}{\langle y\rangle},
\end{equation}

   \end{lemma}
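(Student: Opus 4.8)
The plan is to prove the two assertions of the lemma separately; both reduce to elementary computations once one differentiates $\rho_\omega$. For the inclusion \eqref{eq6}, I would start from the observation that away from the line $\{z\cdot\omega=0\}$ one has $\nabla_z\rho_\omega(z)=\text{sig}(z\cdot\omega)\,\omega$, so that, taking $z=x\mp y$ and the normal at $x\in\partial\Omega$,
$$\partial_n\rho_{\omega}(x-y)+\partial_n\rho_{\omega}(x+y)=\big(n(x)\cdot\omega\big)\Big(\text{sig}\big((x-y)\cdot\omega\big)+\text{sig}\big((x+y)\cdot\omega\big)\Big).$$
Hence the left-hand side can only fail to vanish where the bracket does, which is the first inclusion in \eqref{eq6}. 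To identify $\{\text{sig}\,(x-y)\cdot\omega+\text{sig}\,(x+y)\cdot\omega\neq0\}$ with $A_\omega$, put $a=x\cdot\omega$, $b=y\cdot\omega$ and use $(a-b)(a+b)=a^2-b^2$: the numbers $a-b$ and $a+b$ have strictly opposite signs exactly when $|a|<|b|$, so the bracket vanishes precisely on $\{|y\cdot\omega|<|x\cdot\omega|\}$ plus a set on which one of $a\pm b$ is zero; a check of the finitely many degenerate configurations shows that, up to the $\omega$-null set $\{x\cdot\omega=y\cdot\omega=0\}$ (two points of $\mathcal S^1$ when $x\neq 0$), the complement is $A_\omega=\{|y\cdot\omega|\le|x\cdot\omega|\}$, which is all that is needed.

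For \eqref{eq5} the key remark is the crude a priori bound valid on $A_\omega$: if $(x,y)\in A_\omega$ with $x\in\partial\Omega$, then $|y\cdot\omega|\le|x\cdot\omega|\le|x|\le R$, where $R:=\sup_{x\in\partial\Omega}|x|<\infty$ because $\partial\Omega=\partial\Sigma\subset K$ is bounded. Therefore $\chi_{A_\omega}(x,y)\le\chi_{\{|y\cdot\omega|\le R\}}(\omega)$ uniformly in $x$, and
$$\sup_{x\in\partial\Omega}\Big|\int_{\mathcal S^1}\chi_{A_{\omega}}\,d\sigma(\omega)\Big|\le\sigma\big(\{\omega\in\mathcal S^1:\,|y\cdot\omega|\le R\}\big).$$
It then remains to bound this purely geometric quantity. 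Writing $\omega=(\cos\theta,\sin\theta)$ and $y=|y|(\cos\varphi,\sin\varphi)$ one has $y\cdot\omega=|y|\cos(\theta-\varphi)$, so the set is $\{\theta:\,|\cos(\theta-\varphi)|\le R/|y|\}$: if $|y|\le R$ this is all of $\mathcal S^1$ and one uses $2\pi\le 2\pi\sqrt{1+R^2}\,\langle y\rangle^{-1}$, while if $|y|>R$ it is the union of two arcs of length $2\arcsin(R/|y|)$ centered at $\theta=\varphi\pm\pi/2$, of total measure at most $2\pi R/|y|$ by the elementary inequality $\arcsin t\le\frac{\pi}{2}t$ on $[0,1]$. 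Since $|y|>R$ forces $2\pi R/|y|\le C_R\langle y\rangle^{-1}$, combining the two regimes gives \eqref{eq5} with a constant depending only on $R$.

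I do not expect a genuine obstacle here; the argument is essentially a two-line differentiation followed by a one-dimensional arc-length estimate on the circle. The only points that require a little care are the handling of the measure-zero degenerate configurations so that the set equality in \eqref{eq6} is literally correct, and checking that the constant in \eqref{eq5} depends only on the obstacle (through $R$) and is uniform in $x\in\partial\Omega$ — both of which are taken care of by the crude bound $|x\cdot\omega|\le R$. The conceptual content is simply that, for $|y|$ large, membership in $A_\omega$ forces $\omega$ to be nearly orthogonal to $y$, and the corresponding set of directions has measure comparable to $R/|y|$, which is the source of the decay $\langle y\rangle^{-1}$.
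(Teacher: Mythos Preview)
Your argument is correct and follows exactly the paper's approach: differentiate $\rho_\omega$ to reduce \eqref{eq6} to the elementary observation that $\operatorname{sig}(a-b)+\operatorname{sig}(a+b)\neq 0$ iff $|b|\le|a|$, and for \eqref{eq5} use that $|y\cdot\omega|\le|x\cdot\omega|$ forces $\omega$ to lie in a pair of arcs near $y^\perp$ of total length $O(R/|y|)$. One slip to fix: where you wrote ``the bracket vanishes precisely on $\{|y\cdot\omega|<|x\cdot\omega|\}$'' you mean $\{|x\cdot\omega|<|y\cdot\omega|\}$ (i.e.\ $|a|<|b|$), consistent with the sentence before it and with your stated complement $A_\omega$.
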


   %proof lemma 1
   \begin{proof}
        We can assume without loss of generality that $\omega=(1,0)$. Then if $x=(x_1,x_2)$, $y=(y_1,y_2)$ we have that
   $$
  \renewcommand{\arraystretch}{1.5} \begin{array}{rcl}
   \partial_n\rho_{\omega}(x-y)+ \partial_n\rho_{\omega}(x+y)&=&\text{sig}\,(x-y)\cdot\omega+\text{sig}\,(x+y)\cdot\omega\\
   &=&\text{sig}\,\left(x_1-y_1)\right)+\text{sig}\,\left(x_1+y_1)\right)
   \end{array}\renewcommand{\arraystretch}{1},
   $$
   and hence by inspection we get \eqref{eq6}
   
   As for \eqref{eq5} we have that if $|y\cdot\omega|<|x\cdot\omega|$
   then
   $|y|\left|\cos(y,\omega)\right|\le\text{diam}\,\left(\partial\Omega\right)$. Thus,
   for $|y|$ large enough
   $$
|y|\left|\theta_y-\theta_{\omega}\pm\D\frac{\pi}2\right|\le M\,
$$
   with $\frac y{|y|}=\left(\cos \theta_y,\sin
     \theta_y\right)$ and $\omega=\left(\cos \theta_{\omega},\sin \theta_{\omega}\right)$.
   The lemma easily follows.
   \end{proof}   
     %Lemma 2
   \begin{proposition}
  \label{L2}
  Let $\Omega$ be $\R^2\setminus\Sigma$, with $\Sigma$ a bounded smooth star--shaped obstacle. Assume $u$ is a solution of
   $$\begin{array}{rcl}
   i\partial_t u+\Delta u&=&\epsilon |u|^{p-1}u,\quad p>1,\\
   \left.u\right|_{\partial\Omega}&=&0,
   \end{array}$$
   with $\epsilon\in\{0,1\}$. Then
  \begin{equation}
\label{eq7}
\D\int_{\R}\int_{\partial\Omega\times
  \Omega}\left|\partial_nu(x)\right|^2|u(y)|^2 dS_x\frac{dy}{\langle
  y\rangle}dt\le CM^{3/2} E^{1/2}
\end{equation}
with $M$ and $E$ as in \eqref{eq3}.
     \end{proposition}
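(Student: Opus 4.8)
The plan is to tensorize and run a four–dimensional Morawetz local smoothing estimate. Set $v(x,y)=u(x)\,u(y)$ on $\Omega\times\Omega\subset\R^4$; it vanishes on $\partial(\Omega\times\Omega)$ and, since each factor solves \eqref{eq2}, it solves $i\partial_t v+\Delta_{(x,y)}v=\epsilon\big(|u(x)|^{p-1}+|u(y)|^{p-1}\big)v$. Conservation of $M$ and $E$ for $u$ gives, uniformly in $t$, $\|v(t)\|_{L^2(\Omega\times\Omega)}=M$ and $\|\nabla_{(x,y)}v(t)\|_{L^2(\Omega\times\Omega)}^2=2M\|\nabla u(t)\|_{L^2}^2\le 4ME$, the last step using $\epsilon\ge0$. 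Translating so that $\Sigma$ is star--shaped about the origin with $0$ in its interior, we have $0\notin\overline{\Omega\times\Omega}$, so the multiplier $a(z)=|z|$, $z=(x,y)$, is smooth on a neighbourhood of $\overline{\Omega\times\Omega}$ and all the integrations by parts below are licit.

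Next I would differentiate the Morawetz action $\mathcal M(t)=\mathrm{Im}\int_{\Omega\times\Omega}\bar v\,\tfrac{z}{|z|}\cdot\nabla v\,dz$, use the equation for $v$, and integrate by parts; the only boundary contributions that survive live on $\partial(\Omega\times\Omega)$, because $v$, $\nabla|v|^2$ and $|u|^{p+1}$ vanish there. The outcome is that $\mathcal M'(t)$ is a sum of four nonnegative terms: (i) a Hessian term, equal up to a constant to $\int|z|^{-1}\big(|\nabla v|^2-|(z/|z|)\cdot\nabla v|^2\big)\,dz\ge0$ since $\mathrm{Hess}\,|z|\ge0$; (ii) a bilaplacian term $-\tfrac12\int(\Delta^2a)|v|^2\,dz=\tfrac32\int|z|^{-3}|v|^2\,dz\ge0$, nonnegative precisely because $n=4$ (one has $\Delta^2|z|=-3|z|^{-3}$ in $\R^4$, whereas in $\R^2$ this term carries the unfavourable sign — this is exactly the obstruction the tensorization removes); (iii) the nonlinear term, first produced as $-\epsilon\int(\nabla a\cdot\nabla g)|v|^2\,dz$ with $g=|u(x)|^{p-1}+|u(y)|^{p-1}$, which after one more integration by parts — licit since $|u|^{p+1}=0$ on $\partial\Omega$ — becomes $\tfrac{(p-1)\epsilon}{p+1}\int\big(\Delta_xa\,|u(x)|^{p+1}|u(y)|^2+\Delta_ya\,|u(y)|^{p+1}|u(x)|^2\big)\,dx\,dy\ge0$ because $\Delta_xa,\Delta_ya>0$ and $\epsilon\ge0$; and (iv) the boundary term $\int_{\partial(\Omega\times\Omega)}(-\partial_na)\,|\partial_nv|^2\,dS$, which is nonnegative because $\Sigma$ is star--shaped: on the piece $\partial\Omega\times\Omega$ the outward normal of $\Omega\times\Omega$ is $(n(x),0)$ with $n$ the normal of $\partial\Omega$, and $-\partial_na=\pm\,x\cdot n(x)/|z|$, which is $\ge c_1/|z|>0$ uniformly on the compact smooth curve $\partial\Sigma$ — this is the geometric condition recalled in the Introduction, exactly as in \cite{PV}.

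I would then integrate in $t$ over $\R$. Since $|\mathcal M(t)|\le\|v(t)\|_{L^2}\|\nabla v(t)\|_{L^2}\le 2M^{3/2}E^{1/2}$ for every $t$ and $\mathcal M'\ge0$, one gets $\int_\R\mathcal M'(t)\,dt\le 4M^{3/2}E^{1/2}$. Discarding all nonnegative terms except the part of the boundary term supported on $\partial\Omega\times\Omega$, and noting that there $\partial_nv=(\partial_nu)(x)\,u(y)$, so $|\partial_nv|^2=|\partial_nu(x)|^2|u(y)|^2$, this reads
$$\int_\R\int_{\partial\Omega\times\Omega}(-\partial_na)\,|\partial_nu(x)|^2\,|u(y)|^2\,dS_x\,dy\,dt\le 4M^{3/2}E^{1/2}.$$
Finally, since $x$ ranges over the compact set $K$ we have $\sqrt{|x|^2+|y|^2}\le C\langle y\rangle$, hence $-\partial_na\ge c_1(|x|^2+|y|^2)^{-1/2}\ge c_1C^{-1}\langle y\rangle^{-1}$ on $\partial\Omega\times\Omega$, and the left–hand side therefore dominates $c_1C^{-1}\int_\R\int_{\partial\Omega\times\Omega}|\partial_nu(x)|^2|u(y)|^2\langle y\rangle^{-1}\,dS_x\,dy\,dt$, which is \eqref{eq7}.

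The heart of the matter — and the only place real care is needed — is the virial identity of the second step: the favourable sign of $\Delta^2|z|$ in dimension four (the structural reason the scheme works outside a two–dimensional obstacle even though the direct two–dimensional Morawetz identity does not), and the rewriting that turns the nonlinear contribution, which appears a priori with no sign, into the manifestly nonnegative form above. One should also observe the standard point that this identity and all the integrations by parts are first justified as a priori estimates for smooth, spatially decaying solutions and then extended to general $H^1_0$ data by the well–posedness theory, exactly as in \cite{PV}; with this caveat \eqref{eq7} follows as stated.
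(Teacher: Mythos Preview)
Your approach is exactly the paper's: tensorize to $v(x,y)=u(x)u(y)$ on $\Omega\times\Omega\subset\R^4$ and run the Morawetz virial identity with weight $h(z)=|z|$, exploiting that in four dimensions the bilaplacian term, the Hessian term, the nonlinear term (for $\epsilon\ge 0$) and the boundary term all come with the favourable sign, then read off the boundary contribution on $\partial\Omega\times\Omega$.

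There is, however, a genuine gap at the step where you assert that $-\partial_n a = x\cdot n(x)/|z|\ge c_1/|z|$ \emph{uniformly} on $\partial\Sigma$. Star-shapedness about the origin only yields $x\cdot n(x)\ge 0$; the strict lower bound $c_1>0$ does not follow from ``compact smooth curve'' alone. The paper is explicit about this distinction: if the kernel of $\Sigma$ (the convex set of points about which $\Sigma$ is star-shaped) contains a disk, one may exclude $x\cdot n(x)=0$ by averaging the base point over that disk, and then your argument goes through verbatim. But if the kernel degenerates to a single point or a segment, there can be boundary points where $x\cdot n(x)=0$, and then the boundary weight you extract vanishes there; your last inequality $-\partial_n a\ge c_1C^{-1}\langle y\rangle^{-1}$ fails and the boundary term no longer dominates the left-hand side of \eqref{eq7}.

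For this degenerate case the paper supplies a substantially longer argument: first a 4D local smoothing estimate with weight $\rho(x,y)=\sqrt{1+|x|^2+|y|^2}$ (Lemma~\ref{Ls4d}) controlling the \emph{angular} gradient $\angnabla v$; then a second momentum computation with a tailored vector field $Y$ built from 4D rotations $y_1\partial_{x_k}-x_k\partial_{y_1}$, designed so that on $\partial\Omega\times\Omega$ it restricts to the 2D normal derivative times a factor $\sim 1/y_1$, while its commutators with $\nabla$ are again angular and hence controlled by Lemma~\ref{Ls4d}. Without this additional step your proof establishes \eqref{eq7} only for strictly star-shaped obstacles.
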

     \begin{proof}
       Define $v(x,y)=u(x)u(y)$ solution of the problem
   \begin{equation}
     \label{eq:tnls}
     \renewcommand{\arraystretch}{1.5}\begin{array}{rcl}
   i\partial_tv+\Delta v&=&\epsilon (|u|^{p-1}(x)+u^{p-1}(y))v,\qquad (x,y)\in\Omega\times\Omega,\\
   \left.v\right|_{\partial(\Omega\times\Omega)}&=&0,\\
   v(x,y,0)&=&u_0(x)u_0(y).
   \end{array} \renewcommand{\arraystretch}{1}
   \end{equation}
   
   Then, consider for $h(x,y)=\sqrt{|x|^2+|y|^2}$
   $$M_h(t)=\D\int_{\Omega\times\Omega}|v|^2(x,y,t)h(x,y)dx dy$$
   and compute $\D\frac{d^2}{dt^2}M_h(t)$. The details can be found
   for example in \cite{PV} (p. 278), up to easy modifications to
   handle the nonlinear term in \eqref{eq:tnls}. We just have to bother about $\partial_n h$ with $n$ the normal to $\partial(\Omega\times\Omega)$, namely
   $$\renewcommand{\arraystretch}{1.5}\begin{array}{rcl}
   n=\left(n_x,0\right)&\text{if}&x\in\partial\Omega\quad,\quad y\in\Omega,\\
   n=\left(0,n_y\right)&\text{if}&x\in\partial\Omega\quad,\quad y\in\Omega.
   \end{array}\renewcommand{\arraystretch}{1}$$
   
   Hence $$\partial_n h(x,y)=\D\frac{n_x\cdot x}{\sqrt{|x|^2+|y|^2}}$$
   if $(x,y)\in\partial\Omega\times\Omega$ and similarly in the other
   case. We recall that the kernel of a star-shaped domain is the set
   of points with respect to which the domain is star-shaped. If we
   assume that the kernel of our obstacle contains a disk (note that
   the kernel is always convex) then we may exclude the situation
   where $n_x\cdot x=0$, by averaging over this disk if
   necessary.  Abusing notation by forgetting about this possible
   average over the base point, we then get that
   $$\left|\partial_n h(x,y)\right|\ge\D\frac{C}{|y|},$$
   with $C>0$. The argument follows as in pg 278 of \cite{PV} because
   $$\left|\partial_n v\right|^2=\left|\partial_n u(x)\right|^2\left|u(y)\right|^2$$ 
   if $(x,y)\in\partial\Omega\times\Omega$ and a similar expression if $(x,y)\in\Omega\times\partial\Omega$.
\end{proof}

If the kernel of the obstacle is a single point (or even a segment), we
need to address the situation where there exist points
$x\in \partial\Omega$ such that $n_x\cdot x=0$. We may repeat the
computation of \cite{PV} with weight $\rho(x,y)=\sqrt{1+|x|^2+|y|^2}$, but
the desired control on the boundary does not follow by a subsequent
computation with a weight like the distance to the boundary, because
in our setting the 4D obstacle is not compact and we need to account
for the trace of $\partial_n h$ on the boundary.  Let us
summarize what the computation with weight $\rho$ provides, following \cite{PV}:
\begin{lemma}\label{Ls4d}
  In the conditions of Proposition \ref{L2}, 
  \begin{equation}
    \label{eq:4Dsmoothing}
    \int_{\R} \int_{\Omega\times\Omega} \frac{|\angnabla v|^2}{\rho(x,y)}+
    \frac{|\nabla v|^2+|v|^2}{\rho^3(x,y)}+
    \frac{(|u|^{p-1}(x)+|u|^{p-1}(y)) |v|^2}{\rho(x,y)}
    \,dxdy \le CM^{3/2}E^{1/2}\,,
  \end{equation}
where $\angnabla$ denotes the (4D) angular gradient.
\end{lemma}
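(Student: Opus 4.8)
The plan is to run, for the tensor product $v(x,y)=u(x)u(y)$ of \eqref{eq:tnls} viewed on $\Omega\times\Omega\subset\R^4$, the same weighted virial (Morawetz) computation as on p.~278 of \cite{PV}, but with the regularized radial weight $\rho(x,y)=\sqrt{1+|x|^2+|y|^2}$ instead of $h$; the only novelty is to read off which terms survive and with which sign. Set $z=(x,y)$ and $M_\rho(t)=\int_{\Omega\times\Omega}|v|^2\rho\,dz$. Differentiating twice in $t$, using the equation for $v$, the Dirichlet condition $v|_{\partial(\Omega\times\Omega)}=0$, and — for the contribution of $(|u|^{p-1}(x)+|u|^{p-1}(y))v$ — the same integration by parts as in \cite{PV} (legitimate for smooth data, then by density), one gets an identity for $\tfrac{d^2}{dt^2}M_\rho$ whose right-hand side is a sum of: a positive multiple of the Hessian term $\int\nabla\bar v\,D^2\rho\,\nabla v$; a positive multiple of $-\int|v|^2\,\Delta^2\rho$; a nonlinear term which for $\epsilon=1$ is nonnegative and dominates a positive multiple of $\int\rho^{-1}(|u|^{p-1}(x)+|u|^{p-1}(y))|v|^2$; and boundary integrals over $\partial(\Omega\times\Omega)$ of the form $\pm\int\partial_n\rho\,|\partial_n v|^2\,dS$, with $\partial_n v$ the normal derivative.

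The crux is that in dimension four all the interior terms have the favorable sign, uniformly on $\R^4$. One computes $\nabla\rho=z/\rho$, so $|\nabla\rho|\le1$; $D^2\rho=\rho^{-1}\mathrm{Id}-\rho^{-3}z\otimes z\ge0$, so that $\nabla\bar v\,D^2\rho\,\nabla v=\rho^{-1}|\angnabla v|^2+\rho^{-3}|\partial_r v|^2$ with $\partial_r$ the radial $\R^4$-derivative; $\Delta\rho=\rho^{-3}(4+3|z|^2)\ge3\rho^{-1}>0$; and — this is the point — $\Delta^2\rho=-3\rho^{-7}(|z|^4+4|z|^2+8)\le-3\rho^{-3}<0$, so $-\Delta^2\rho\gtrsim\rho^{-3}$. (It is precisely the sign of this bilaplacian term that is wrong for the analogous weight in $\R^2$, which forces the passage to the tensor product.) Since $\rho\ge1$ we may write $\rho^{-1}|\angnabla v|^2+\rho^{-3}|\partial_r v|^2\gtrsim\rho^{-1}|\angnabla v|^2+\rho^{-3}|\nabla v|^2$, and adding the bilaplacian and nonlinear contributions one sees that the whole integrand of \eqref{eq:4Dsmoothing} is dominated by the interior part of $\tfrac{d^2}{dt^2}M_\rho$.

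For the boundary integrals I use that $\Sigma$ is star-shaped: on $\partial\Omega\times\Omega$ the outward normal is $n=(n_x,0)$ and $\partial_n\rho=(x\cdot n_x)/\rho$ has a fixed sign, and symmetrically $\partial_n\rho=(y\cdot n_y)/\rho$ on $\Omega\times\partial\Omega$; this is exactly the star-shaped condition that renders these terms harmless, and since here we only need their sign — not a quantitative lower bound, as in Proposition \ref{L2} — no averaging over a base point of the kernel is needed, and they are simply discarded. It then remains to bound the right-hand side after integrating over $t\in\R$: $\int_\R\tfrac{d^2}{dt^2}M_\rho\,dt=\bigl[\tfrac{d}{dt}M_\rho\bigr]_{-\infty}^{+\infty}$, and $\tfrac{d}{dt}M_\rho(t)=2\,\mathrm{Im}\int_{\Omega\times\Omega}\bar v\,\nabla\rho\cdot\nabla v\,dz$, so $|\nabla\rho|\le1$ gives $\bigl|\tfrac{d}{dt}M_\rho(t)\bigr|\le2\|v(t)\|_{L^2(\R^4)}\|\nabla v(t)\|_{L^2(\R^4)}$. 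Since $v=u(x)u(y)$ one has $\|v(t)\|_{L^2(\R^4)}=M$ and $\|\nabla v(t)\|_{L^2(\R^4)}^2=2M\|\nabla u(t)\|_{L^2(\Omega)}^2\le4ME$, using conservation of $M,E$ and $E\ge\tfrac12\|\nabla u\|_{L^2}^2$; hence $\bigl|\tfrac{d}{dt}M_\rho(t)\bigr|\le4M^{3/2}E^{1/2}$ uniformly in $t$, and \eqref{eq:4Dsmoothing} follows with an absolute constant.

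The only work that is not mere bookkeeping of the identity in \cite{PV} is checking that the interior sign conditions ($D^2\rho\ge0$, $-\Delta^2\rho\gtrsim\rho^{-3}$, $\Delta\rho\gtrsim\rho^{-1}$) persist uniformly down to $z=0$ after the regularization $|z|\rightsquigarrow\sqrt{1+|z|^2}$ — which the explicit formulas above confirm, via $|z|^4+4|z|^2+8\ge(1+|z|^2)^2$ and $4+3|z|^2\ge3(1+|z|^2)$ — and that the boundary terms carry the favorable sign, which is exactly the star-shaped hypothesis. I expect these elementary sign verifications to be the only genuine obstacle; everything else is the virial identity already carried out in \cite{PV}.
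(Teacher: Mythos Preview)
Your proposal is correct and is exactly the approach the paper intends: the paper does not give a detailed proof of this lemma but simply says ``Let us summarize what the computation with weight $\rho$ provides, following \cite{PV}'', i.e., rerun the virial identity of \cite{PV} on the tensor product $v$ with the regularized 4D radial weight $\rho=\sqrt{1+|x|^2+|y|^2}$ and read off the signs. You have carried this out explicitly --- the Hessian, bilaplacian, and partial-Laplacian computations you record are correct, the star-shaped hypothesis disposes of the boundary term with the right sign (no quantitative lower bound on $\partial_n\rho$ is needed here, only its sign), and the bound on $\tfrac{d}{dt}M_\rho$ by $CM^{3/2}E^{1/2}$ via $|\nabla\rho|\le1$ and conservation laws is the standard closing step.
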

We now proceed with the remaining case in the proof of Proposition \ref{L2}~: control
of the boundary term in the situation where there exist points
$x\in\partial \Omega$ such that $x\cdot n(x)=0$. While Lemma
\ref{Ls4d} is not enough to conclude directly, it will be a key
ingredient in what follows. Recall we aim at controling the lefthand
side of \eqref{eq7}.

 Let $R$ be such that $\partial\Omega\subset B(0,R-1)$. For the part of
the 4D boundary $(\partial\Omega\cap B(0,R))^2$ we may proceed as in
\cite{PV}, using the control of $\nabla v$ in \eqref{eq:4Dsmoothing},
because the weight $\rho^{3}$ is irrelevant. The remaining part of the 4D
boundary is a union of cylinders $\partial\Omega \times (\R^2\setminus
B(0,R))$ and $(\R^2\setminus B(0,R))\times \partial\Omega$. They will
be treated similarly and we restrict our attention to the first one
(boundary in $x$ and exterior of a large ball in $y$). We may further
divide the $y$-region in a finite number of angular sectors: without
loss of generality we now assume to be in such a sector $S=\{|y_2|<|y_1|/10\}$. Notice how being outside a ball forces $|y_1|>R/2$.

 From the
compactness of $\partial\Omega$ and its smoothness as a curve in
$\R^2$, there exists a vector field $Z$ such that $Z$ is defined in
$\Omega$, its restriction on $\partial\Omega$ is the normal
derivative, and it vanishes outside $B(0,R)$. In a coordinate system,
$$
Z=a(x)\partial_{x_1}+b(x)\partial_{x_2},
$$
where $a(x)$, $b(x)$ are smooth functions vanishing for $|x|>R$. We define
two  4D vector fields on our 4D region of interest, $(\Omega\setminus
B(0,R))\times S$:
$$
Y_k= y_1 \partial_{x_k}-x_k\partial_{y_1}\,, \text{ with
} k=1,2.
$$
Notice that $Y_1$ and $Y_2$ are, up to a factor $({\sqrt{|x|^2+|y|^2}})^{-1}$, angular derivatives (in 4D). We may
now define
$$
Y= \alpha(x,y) Y_1+\beta(x,y) Y_2
$$
where the support of $\alpha$ and $\beta$ is in $(\Omega\setminus
B(0,R))\times S$ and their respective value is adjusted such that
$(Y v)_{|x\in \partial\Omega} \approx h^{-1}(x) Z v$, as $\partial_{y_1}
v=0$ on $\partial\Omega \times \Omega$: set
$\alpha(x,y)=\phi(y/|y|) a(x)/y_1^2$ and $\beta(x,y)=\phi(y/|y|)b(x)/y_1^2$ where
$\phi(y/|y|)$ is the identity in the angular sector $S$, and vanishes
outside  $2S$. One should think of the following computation as an
analog of the one in \cite{PV} p. 278/279, but where we think of the
region of 4D space we are integrating over as being foliated by
spheres $|x|^2+|y|^2=C$ rather than planes $|y|=C$. We use the following momentum
$$
J=2 \text{Im} \int_{\Omega\times \Omega} \bar v (Y v)\, dxdy.
$$
For the sake of notational convenience, we set $X=(x,y)$ and
$Y=A(X)\cdot \nabla_X=A\cdot \nabla$ and $N(u)=|u|^{p-1}(x)+|u|^{p-1}(y)$. The following computation is standard, does
not depend on the specific form of $A$ and similar to the one in
\cite{PV} where $A=\nabla h$;
\begin{align*}
  \partial_t \left(2 \text{Im} \int \bar v \nabla v \cdot A \right) = &
   - 2 \text{Im} \int \partial_t v ( 2 \nabla \bar v \cdot A +\bar
  v  \nabla\cdot A) \\
 = & \,  -4 \text{Re} \int \Delta v \nabla \bar v \cdot A + 2 \int |\nabla
 v|^2 \nabla\cdot A-\int |v|^2 \Delta \nabla\cdot A \\
 & {}+2 \int N \nabla (|v|^2) \cdot A+|v|^2 N \nabla \cdot A\,.
\end{align*}
We have then (with $A_n$ the normal component of $A$ on the boundary
and $d\sigma$ the surface measure on it, with orientation toward the domain)
\begin{align*}
2 \text{Re} \int \Delta v \nabla \bar v \cdot A = & -2 \int |\partial_n
v|^2 A_n d\sigma-2\text{Re}\int \nabla v \cdot \nabla (A\cdot \nabla
\bar v)\\
= & - 2 \int |\partial_n
v|^2 A_n d\sigma-2\text{Re}\int A\cdot \nabla (\nabla \bar v\cdot
\nabla v) \\
 & {}+2\text{Re}\int [A\cdot \nabla,\nabla] \bar v\cdot
\nabla v
\end{align*}
and by substitution and integration by parts in the middle term and
the nonlinear term above,
\begin{multline*}
  \partial_t \left(2 \text{Im} \int \bar v \nabla v \cdot A \right) =  2 \int |\partial_n
v|^2 A_n d\sigma-4 \text{Re}\int [A\cdot \nabla,\nabla] \bar v\cdot
\nabla v\\
 {}-\int |v|^2 \Delta \nabla\cdot A +2 \frac{p-1}{p+1}\int  |v|^2
 (|u|^{p-1}(x) \nabla_x \cdot A_x+|u|^{p-1}(y)\nabla_y\cdot A_y)\,,
\end{multline*}
where $A_x$ (resp.  $A_y$) stands for the projection over the $x$
coordinates (resp. the $y$ coordinates) of the vector field $A$. Given our choice of $Y=A\cdot \nabla$, the boundary term is what we
seek, up to an harmless factor $h(x,y)/y_1$:
$$
\int |\partial_n
v|^2 A_n d\sigma=\int Yv \nabla \bar v \,d\sigma=\int_{\partial\Omega\times \Omega} \frac{\phi(y/|y|)}{y_1} |\partial_n
v|^2 \,dS_x dy\,.
$$
The term $N(u)|v|^2$ will be under control by Lemma
\ref{Ls4d}. For the $|v|^2$ term, we may use Poincar\'e's inequality
on the spheres $|x|^2+|y|^2=C$ to control it as well by $\int
\rho^{-1}(x,y) |\angnabla v|^2$ (notice how when deriving in $x$ we do
not gain decay in the coefficients of $Y$).

We are left with the term carrying $[Y,\nabla]$: for any scalar function $\gamma$, we have
$[\gamma Y_k,\partial]=\gamma [Y_k,\partial]-(\partial \gamma)
Y_k=-(\partial \gamma) Y_k$ ; hence the vector field $[Y,\nabla]$ is spanned by
angular derivatives.  Moreover, $|\partial \gamma|\lesssim
|\gamma|$ (as the worst situation is when the derivative would hit
$a(x)$ or $b(x)$, which does not gain decay in $y$) and $\gamma \approx y_1/\rho(x,y)$, so that
$$
\int | [Y,\nabla] \bar v\cdot\nabla v| \lesssim \int \frac{|\angnabla v|^2}{\rho(x,y)}
$$
which is controlled by Lemma \ref{Ls4d} after time
integration. This concludes the proof of Proposition \ref{L2}.
\qed
    %proposition 1
   \begin{proposition}
  \label{P1}
  In the conditions of Proposition \ref{L2}
  \begin{equation}
\label{eq8}
%dobe barra o simple
\D\int_{\R}\int_{\Omega\times\Omega}\left|D^{1/2}|u|^2\right|^2 dx dy\le CM^{3/2}E^{1/2}.
\end{equation}
   \end{proposition}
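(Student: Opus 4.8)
The plan is to run the interaction Morawetz identity of Theorem \ref{Th1}, discard every favorable term except the one carrying $\mathcal R(|u|^2)$, and absorb the unfavorable boundary terms into Proposition \ref{L2}; the key point is that the reflection $x\mapsto-x$ forces a $\langle y\rangle^{-1}$ gain on those boundary terms.

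Concretely, I would apply Theorem \ref{Th1} twice. First in the diagonal configuration $\Omega_1=\Omega_2=\Omega$, $u_1=u_2=u$, $\rho_\omega(z)=|z\cdot\omega|$; there the leading term on the right of \eqref{eq4} is $\int_s|\partial_s\mathcal R(|u|^2)(s,\omega)|^2\,ds$, the nonlinear term is $\ge0$ (as $\epsilon\in\{0,1\}$), the bilinear kinetic term is $\ge0$, and there remain the two boundary integrals. Second in the reflected configuration $\Omega_1=\Omega=-\Omega_2$, $u_1=u$, $u_2=u(-\,\cdot\,)$ — legitimate because $u(-\,\cdot\,)$ solves the same NLS on $-\Omega$, again the exterior of a bounded smooth star--shaped obstacle — which amounts to replacing $\rho_\omega(x-y)$ by $\widetilde\rho_\omega(x,y)=|(x+y)\cdot\omega|$; here too all terms but the boundary ones are $\ge0$. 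Adding the two identities and discarding every nonnegative term except $\int_s|\partial_s\mathcal R(|u|^2)(s,\omega)|^2\,ds$ leaves
$$\int_s\big|\partial_s\mathcal R(|u|^2)(s,\omega)\big|^2\,ds\le \frac{d^2}{dt^2}\big(I_{\rho_\omega}+I_{\widetilde\rho_\omega}\big)(t)+\big|\mathcal B_\omega(t)\big|\,,$$
where $\mathcal B_\omega$ collects the boundary integrals, whose kernel on $\partial\Omega\times\Omega$ is a constant multiple of $\big(\partial_n\rho_\omega(x-y)+\partial_n\rho_\omega(x+y)\big)\,|u(y)|^2\,|\partial_n u(x)|^2$, and symmetrically on $\Omega\times\partial\Omega$.

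Next I would integrate in $\omega\in\mathcal S^1$ and in $t\in[-T,T]$. Lemma \ref{L1} handles the boundary term: for fixed $x\in\partial\Omega$, $y\in\Omega$, the kernel $\partial_n\rho_\omega(x-y)+\partial_n\rho_\omega(x+y)$ is, as a function of $\omega$, supported in $A_\omega$ and bounded there by $|\omega\cdot n(x)|\le1$, so \eqref{eq5} gives $\int_{\mathcal S^1}\big|\partial_n\rho_\omega(x-y)+\partial_n\rho_\omega(x+y)\big|\,d\sigma(\omega)\le C\langle y\rangle^{-1}$; hence $\int_{\mathcal S^1}|\mathcal B_\omega(t)|\,d\sigma(\omega)\le C\int_{\partial\Omega\times\Omega}|\partial_n u(x)|^2|u(y)|^2\langle y\rangle^{-1}\,dS_x\,dy$, which after integration in $t$ is $\le CM^{3/2}E^{1/2}$ by Proposition \ref{L2}. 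For the first term on the right, $\int_{-T}^T\frac{d^2}{dt^2}(I_{\rho_\omega}+I_{\widetilde\rho_\omega})\,dt=[\dot I_{\rho_\omega}+\dot I_{\widetilde\rho_\omega}]_{-T}^T$; since $|\nabla\rho_\omega|=|\nabla\widetilde\rho_\omega|=1$, one integration by parts in space (with no boundary contribution, since $u|_{\partial\Omega}=0$) together with mass and energy conservation yields $|\dot I_{\rho_\omega}(t)|+|\dot I_{\widetilde\rho_\omega}(t)|\le C\,M\,\|u\|_{L^2}\|\nabla u\|_{L^2}\le CM^{3/2}E^{1/2}$ uniformly in $\omega$ and $t$ (recall $\|\nabla u\|_{L^2}^2\le2E$). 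Finally, the Fourier slice theorem identifies $\int_{\mathcal S^1}\int_\R|\partial_s\mathcal R(f)(s,\omega)|^2\,ds\,d\sigma(\omega)$ with a constant multiple of $\|D^{1/2}f\|_{L^2(\R^2)}^2$ for $f=|u|^2$ extended by zero outside $\Omega$, and by Remark \ref{R4'} this is comparable to $\|D^{1/2}(|u|^2)\|_{L^2(\Omega)}^2$. Putting these together, $\int_{-T}^T\|D^{1/2}(|u|^2)(t)\|_{L^2}^2\,dt\le CM^{3/2}E^{1/2}$ uniformly in $T$, and $T\to\infty$ gives \eqref{eq8}. As usual, I would first carry out this computation for smooth, suitably decaying solutions, where $I_{\rho_\omega}$ and its time derivatives are finite and the identity of Theorem \ref{Th1} is rigorous, and then conclude by density in $H_0^1(\Omega)$.

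The main obstacle is the boundary estimate: the cancellation between the $\rho_\omega(x-y)$ and $\rho_\omega(x+y)$ kernels that yields the decisive $\langle y\rangle^{-1}$ decay — this is exactly Lemma \ref{L1} — together with the fact, which is the real content of Proposition \ref{L2} and ultimately rests on the $4$D tensorized local smoothing of Lemma \ref{Ls4d}, that a boundary integral with only this weak weight is still controlled by $M^{3/2}E^{1/2}$. Once that is in place, identifying the surviving term with $\|D^{1/2}(|u|^2)\|_{L^2}^2$ and bounding the first derivatives $\dot I_{\rho_\omega}$, $\dot I_{\widetilde\rho_\omega}$ are routine.
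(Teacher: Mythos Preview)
Your proposal is correct and follows essentially the same route as the paper: apply Theorem~\ref{Th1} in both the diagonal and reflected configurations, sum, discard positive terms, integrate in $t$ and $\omega$, use Lemma~\ref{L1} to produce the $\langle y\rangle^{-1}$ weight on the boundary term and Proposition~\ref{L2} to control it, and identify the surviving term via Plancherel for the Radon transform. The only minor remark is that the appeal to Remark~\ref{R4'} at the end is unnecessary --- the Fourier slice theorem already gives $\|D^{1/2}(|u|^2)\|_{L^2(\R^2)}^2$ for the zero extension, which is what is meant in \eqref{eq8}.
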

   
   %proof
   \begin{proof}
 We use Theorem \ref{Th1} with $u_1=u(x)$ and $u_2=u(-x)$, $\Omega'=-\Omega$. Then if $y\in\Omega'$
  $$\partial_n\rho_{\omega}(x-y)=\partial_n\rho_{\omega}(x+y).$$
  
  We use again Theorem \ref{Th1} with $u=u_1=u_2$ and
  $\Omega_1=\Omega_2=\Omega$. Then we sum the two
  identities and integrate in
  time once. After discarding some positive terms we get

  \begin{multline*}
  \D\int_0^T\int_s\left|\partial_s\mathcal R|u|^2(s,\omega)\right|^2 ds d\omega\\
  {}-2\D\int_{\partial\Omega\times\Omega}\left|\partial_n u(x)\right|^2|u(y)|^2\left(\partial_n\rho_{\omega}(x-y)
 + \partial_n\rho_{\omega}(x+y)\right)dS_x dy\\
{}-2\D\int_{\Omega\times\partial\Omega}\left|\partial_n u(y)\right|^2|u(x)|^2\left(\partial_n\rho_{\omega}(x-y)
 + \partial_n\rho_{\omega}(x+y)\right)dxdS_y\\
     \le C M^{3/2}E^{1/2},
  \end{multline*}
  where the right hand side follows from the trivial bounds of $I'_{\rho_{\omega}}$.
  
  We conclude the argument integrating on $\omega\in\mathbb{S}^1$ and using the well known Plancherel's Theorem for the Radon transform as in \cite{PV}, (Prop 2.2) to obtain 
  $$
\left\|D^{1/2}(|u|^2)\right\|_{L^2_{x,[0,T]}}.
$$
  For  the boundary term we use Lemma \ref{L1} and then Proposition
  \ref{L2}. Notice that both lemmas are symmetric in the $(x,y)$
  variables.
   \end{proof}
  \begin{remark}
The astute reader will have noticed by now that the entire computation
may be performed directly, with the (integrated over $\omega$)
weight $\rho(x,y)=|x-y|+|x+y|$. We feel  the directional weight to be
more natural, with the key Lemma \ref{L1} having a simple geometrical
proof. In fact, we were led to the present result from considering
simple obstacles with symmetries (e.g. the disk).
  \end{remark}
    %proposition 2
   \begin{proposition}
  \label{P2}
  In the conditions of Lemma \ref{L2} and $\epsilon=0$,
  \begin{equation}
\label{eq9}
%dobe barra o simple
\left\|D^s  u\right\|_{L^4_tL^8_x}\le C\|u_0\|_{{H}_0^{s+1/4}}
\end{equation}
   \end{proposition}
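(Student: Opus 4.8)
The plan is to upgrade the flat-space Strichartz estimate $\|D^s e^{it\Delta_{\R^2}}f\|_{L^4_tL^8_x}\le C\|f\|_{\dot H^{s+1/4}}$ (which is the classical $L^4_tL^8_x$ Schrödinger estimate in two dimensions, at the scaling-sharp regularity, with an extra $1/4$ derivative because $(4,8)$ is a non-endpoint admissible pair in $d=2$) to the exterior domain $\Omega$. The standard route here is via local smoothing: outside a star-shaped (in particular non-trapping) obstacle the linear Dirichlet flow $e^{it\Delta_\Omega}$ satisfies a local smoothing estimate with a $1/2$-derivative gain, and this allows one to write $u=e^{it\Delta_\Omega}u_0$ as a free-space solution plus a remainder that is smoother and supported near the obstacle, so that the free-space Strichartz estimate applies to the main term and a crude Sobolev embedding disposes of the remainder.

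Concretely, I would carry it out as follows. First, fix a cutoff $\chi\in C_c^\infty(\R^2)$ equal to $1$ on a neighborhood of $\Sigma$, and split $u=\chi u+(1-\chi)u$. For the exterior piece $w=(1-\chi)u$, compute $(i\partial_t+\Delta)w = [\Delta,\chi]u =: F$, which is supported in the compact annulus $\mathrm{supp}\,\nabla\chi$ and controlled, after one time integration, by the local smoothing estimate for $e^{it\Delta_\Omega}$ (available for non-trapping exterior domains, cf. \cite{BSS} and the references there, and of the same type as the Morawetz-based estimates used in Proposition \ref{L2}): $\|F\|_{L^2_tH^{s-1/2}}\lesssim \|u_0\|_{H^{s+1/2}_0}$, hence by Remark \ref{R4'} also in the fractional $L^p$-based scale. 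One then extends $w$ and $F$ by zero to all of $\R^2$ (legitimate since $w$ and its relevant derivatives vanish on $\partial\Omega$ up to the order needed here, $w$ solving a Dirichlet problem) and applies the inhomogeneous free-space Strichartz estimate in $\R^2$ together with the dual local-smoothing (Christ--Kiselev) bound to get $\|D^s w\|_{L^4_tL^8_x(\R^2)}\lesssim \|(1-\chi)u_0\|_{\dot H^{s+1/4}}+\|D^{s-3/4}F\|_{L^2_{t,x}}$, and both terms are bounded by $C\|u_0\|_{H^{s+1/4}_0}$ once we also invoke energy conservation/boundedness of $e^{it\Delta_\Omega}$ on $H^\sigma_0$ and the equivalence of the $H^\sigma_0$ norms with the $D^\sigma$ norms from Remark \ref{R4'}. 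For the near-obstacle piece $\chi u$, which is spatially compactly supported, one uses a local-in-space Strichartz estimate near a non-trapping obstacle together with the $1/2$-gain from local smoothing: on bounded spatial regions, $L^4_tL^8_x$ with $s$ derivatives follows from $L^2_tH^{s+\text{(something)}}$ plus the smoothing estimate, and here the $s\le 3/4$ restriction enters — it is exactly the range in which $s+1/4\le 1$, so that the required input norm $\|u_0\|_{H^{s+1/4}_0}$ is controlled by the energy norm used throughout the paper, and the $H^{s+1/2}_0$ appearing in the smoothing estimate does not exceed what the argument can afford.

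The step I expect to be the main obstacle is the bookkeeping at the boundary when extending $w$ by zero and when handling the compactly supported piece $\chi u$: one must be careful that the commutator term $F=[\Delta,\chi]u$ is genuinely controlled by local smoothing (this is where the star-shaped/non-trapping hypothesis is essential — on a trapping obstacle the $1/2$-gain local smoothing fails), and that the fractional derivatives $D^s$ commute appropriately with the Dirichlet realization, which is precisely the content of Remark \ref{R4'}. A secondary technical point is the Christ--Kiselev lemma application needed to pass from the a priori $L^2_t$ smoothing bound on $F$ to the retarded Strichartz estimate, valid since $4>2$. The claimed consequence \eqref{eq:pinfty} then follows by taking $s=3/4$ and using the two-dimensional Sobolev embedding $\dot H^{3/4}(\R^2)\cap \dot H^{-1/4}\hookrightarrow$ (more precisely, Gagliardo–Nirenberg $\|f\|_{L^\infty}\lesssim \|D^{3/4}f\|_{L^8}^{\theta}\|f\|_{L^?}^{1-\theta}$ with the $L^4_t$ integrability matching up), interpolating the two displayed bounds of Theorem \ref{Th3}; but as that deduction is attributed to Theorem \ref{Th3} rather than to this proposition, I would simply record it there and not reprove it here.
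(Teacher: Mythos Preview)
Your route is workable but differs from the paper's. You propose the Staffilani--Tataru / Burq--G\'erard--Tzvetkov transfer: cut off near the obstacle, control the commutator $[\Delta,\chi]u$ via local smoothing on $\Omega$, and apply free-space Strichartz on $\R^2$ to the exterior piece --- essentially the method behind \cite{BSS}. With minor bookkeeping corrections (local smoothing \emph{gains} half a derivative on the commutator, so one actually has $\|F\|_{L^2_t H^{s-1/4}_{\mathrm{loc}}}\le C\|u_0\|_{H^{s+1/4}_0}$, which is precisely what the combined smoothing--Strichartz Duhamel bound requires), this does deliver \eqref{eq9}. The paper instead obtains \eqref{eq9} \emph{directly from Proposition~\ref{P1}}: the two-dimensional Sobolev embedding $\|f\|_{L^4}\le C\|D^{1/2}f\|_{L^2}$ applied to $f=|u|^2$ turns \eqref{eq8} into $\|u\|_{L^4_tL^8_x}^2\le C\, M^{3/4}E^{1/4}$; for $u_0$ spectrally localized at frequency $\sim 2^j$ with respect to $-\Delta_\Omega$ this reads $\|D^s u\|_{L^4_tL^8_x}\le C\, 2^{j(s+1/4)}\|u_0\|_{L^2}$, and one sums over $j$ using the square-function and Riesz-transform bounds on domains from \cite{IP}. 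The paper's argument is thus self-contained --- \eqref{eq9} appears as an immediate corollary of the interaction-Morawetz estimate \eqref{eq8}, the paper's main new tool --- whereas your argument imports free-space Strichartz and the smoothing-transfer machinery as external input; the paper is aware of that alternative (see Remark~\ref{R44}) but reserves it for extending below $p=5$.
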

   %proof
   \begin{proof}
Take $u_0$ a frequency localized function (according to the Dirichlet
Laplacian of $\Omega$). Then \eqref{eq9} is a consequence of
\eqref{eq8} and Sobolev embedding. The general case follows by the
square function estimates on domains and the boundedness in $L^p$,
$1<p<\infty$, of the corresponding Riesz transforms. see for example
\cite{IP}, Theorem 1.2 and Remark (1.10).

\end{proof}
Notice that Proposition \ref{P1} and Proposition \ref{P2} imply
Theorem \ref{Th3}. We already have all the necessary ingredients for
the proof of Theorem \ref{Th2}.
\begin{proof}[Proof of Theorem \ref{Th2}]
For simplicity we will consider the particular case $p=5$. The necessary changes needed for the rest of the powers will be outlined at the end of the proof.
  
   We first notice that by Gagliardo--Nirenberg inequality
  \begin{equation}
\label{eq10}
\|f\|_{L^{\infty}(\R^2)}\le C\|f\|_{L^8(\R^2)}^{2/3}\left\|D^{3/4}f\right\|_{L^8(\R^2)}^{1/3}.
\end{equation}

Hence
\begin{equation}
\label{eq11}
\left\|e^{it\Delta_{\Omega}}u_0\right\|_{L_t^4L_x^{\infty}}\le C\|u_0\|_{H_0^{s}},\qquad s>1/2.
\end{equation}
This inequality together with the trivial one
$$\left\|\nabla|u|^4u\right\|_{L^2}\le c\|u\|_{L^{\infty}}^4\|\nabla u\|_{L^2},$$
allows us to do a fixed point argument in $L_t^4L_x^{\infty}\cap L_t^{\infty}H_0^1$, to solve locally in time
$$u(t)=e^{it\Delta_{\Omega}}u_0+i
\D\int_0^te^{i(t-\tau)\Delta_{\Omega}}|u|^4u(\tau) d\tau.$$

On the other hand given any interval $I$ of time where the solution exists we know from Proposition \ref{P1} that $\|u\|_{L_I^4 L_x^8}$ is finite and bounded by $E^{1/8}M^{3/8}$. Hence given any $\epsilon>0$ there is a finite number of disjoint intervals $I_1,\dots, I_N$ such that $\bigcup\limits_{j=1}^N I_j=I$ with $N=N(\epsilon)$ and
$$\|u\|_{L_{I_j}^4 L_x^{\infty}}^4= \epsilon,\quad j<N(\epsilon), \quad
\|u\|_{L_{I_j}^4 L_x^{\infty}}^4\leq \epsilon,\quad j=N(\epsilon).$$
 Hence 
$$N(\epsilon\leq CE^{1/2}M^{3/2}\epsilon^{-1}.$$

Assume $I_j=[t_j,t_{j+1})$ so that 
$$\|u(t_j)\|_{H_0^1}\le E^{1/2},$$
and call
$$\sigma(t)=\D\int_{t_j}^{t}\|u\|_{L_x^{\infty}}^4 dt.$$
Then $\sigma(t)$ is a continuous function with $\sigma(t_j)=0$.
Moreover for $t_j\le t<t_{j+1}$
$$\left\|D^{3/4}u\right\|_{L_{[t_j,t]}^4 L_x^8}\le
CE^{1/2}\left(1+\|u\|^4_{L_{[t_j,t]}^4L_x^\infty}\right).$$

Hence
$$\renewcommand{\arraystretch}{1.8}\begin{array}{rcl}
\sigma(t)&\le&\|u\|_{L_{[t_j,t]}^4 L_x^8}^{8/3}
\left(CE^{1/2}\left(1+\sigma(t)\right)\right)^{4/3}\\
&\le&C\epsilon^{2/3}E^{2/3}(1+\sigma(t)^{4/3}),
\end{array}\renewcommand{\arraystretch}{1}$$
and taking $\epsilon E$ small enough, (i.e. $ \epsilon=(CE)^{-1}$) we conclude that $\sigma(t_{j+1})$ remains bounded by a universal constant independent of $I$. Therefore the solution is global and
$$\|u\|^4_{L_{\R}^4L_{x}^{\infty}}\le CN\le CE^{3/2}M^{3/2}.$$

Finally defining
$$u_+=u_0+\D\int_0^{\infty}e^{i\tau\Delta_{\Omega}}|u|^4u(\tau) d\tau$$
we obtain \eqref{eq4'}.

For the general case $p>5$ we need a bound for $\|u\|_{L_{t}^{p-1} L_x^{\infty}}.$ Call $q=p-1$. Then for $\theta=4/q$ we have 
$$\|u\|^q_{L_{t}^{q} L_x^{2q}}\leq C\int_0^{+\infty} \left(\|u\|^\theta_{L_x^{8}}
\|u\|^{1-\theta}_{H^1_0}\right)^q\, dt\leq CM^{3/2}E^{\frac{q}{2}-\frac{3}{2}}.$$
Hence
$$\|e^{it\Delta_\Omega} u_0\|^q_{L_{t}^{q} L_x^{2q}}\leq CM^{3/2q}E^{\frac{1}{2}(1-\frac{3}{q}}).$$
Finally it is enough to use instead of \eqref{eq10} the following inequality
$$\|f\|_{L^{\infty}(\R^2)}\le C\|f\|_{L^{2q}(\R^2)}^{2/3}\left\|D^{3/q}f\right\|_{L^{2q}(\R^2)}^{1/3}, \quad q\geq 3.$$
The rest of the proof works similarly.
\end{proof}
\begin{remark}
Notice how, in the previous argument, one is combining a  Strichartz estimate of type $L^{r}_t
L^\infty_x$ with the $H^1_0$ norm to control
the source term $|u|^{p-1} u$ in $L^1_t(H^1_0)$, and this in turn
yields $r=p-1$. Given that all available Strichartz estimates on
domains are suboptimal with respect to regularity - there is a loss in
the Sobolev embedding scale when compared to $\R^2$ - we cannot hope
to use e.g. dual Strichartz pairs to allow for higher time
integrability. As such, dealing with a given $p$ requires the
availability of Strichartz estimates with $r=p-1$. In light of
ongoing developpement on linear dispersion on domains, we may hope
(being optimistic) for $r>12/5$, which is still not the $\R^2$ range,
and new insights on the linear theory will be needed to match it.
\end{remark}
    \bibliographystyle{plain}
    \bibliography{NLS-PV}

\end{document}